\newtheorem{lemma}{Lemma}
\newtheorem{theorem}{Theorem}
\newtheorem{observation}{Observation}
\title{A MIP framework for non-convex uniform price day-ahead electricity auctions}
\date{January 2015}
\author{Mehdi Madani\footnote{Louvain School of Management - Place des Doyens 1 bte L2.01.01, 1348 Louvain-la-Neuve, Belgium. Email: mehdi.madani@uclouvain.be}         \and
        Mathieu Van Vyve\footnote{CORE \& Louvain School of Managment, 1348 Louvain-la-Neuve, Belgium. This author is also member of ECORE, the association between CORE and ECARES.}
}
\begin{document}

%

\vspace{-0.2cm}
{\let\newpage\relax\maketitle}

\begin{abstract}
It is well-known that a market equilibrium with uniform prices often does not exist in non-convex day-ahead electricity auctions. We consider the case of the non-convex, uniform-price Pan-European day-ahead electricity market "PCR" (Price Coupling of Regions), with non-convexities arising from so-called complex and block orders. Extending previous results, we propose a new primal-dual framework for these auctions, which has applications in both economic analysis and algorithm design. The contribution here is threefold. First, from the algorithmic point of view, we give a non-trivial exact (i.e. not approximate) linearization of a non-convex 'minimum income condition' that must hold for complex orders arising from the Spanish market, avoiding the introduction of any auxiliary variables, and allowing us to solve market clearing instances involving most of the bidding products proposed in PCR using off-the-shelf MIP solvers. Second, from the economic analysis point of view, we give the first MILP formulations of optimization problems such as the maximization of the traded volume, or the minimization of opportunity costs of paradoxically rejected block bids. We first show on a toy example that these two objectives are distinct from maximizing welfare. We also recover directly a previously noted property of an alternative market model. Third, we provide numerical experiments on realistic large-scale instances. They illustrate the efficiency of the approach, as well as the economics trade-offs that may occur in practice.

\textbf{Keywords: }{Day-ahead electricity market auctions, \and Non-convexities, \and Mixed Integer Programming, \and Market Coupling, \and Equilibrium Prices  }

\textbf{Mathematics Subject Classification (2000): }{90C11 \and 90-08 \and 90C06 }

\end{abstract}


\maketitle

\newpage

\section{Introduction}
\label{intro}

\subsection{Equilibrium in non-convex day-ahead electricity auctions}\label{subsec:non-convex-markets}

An extensive literature now exists on non-convex day-ahead electricity markets or electricity pools, dealing in particular with market equilibrium issues in the presence of indivisibilities, see e.g. \cite{hogan,madani2,madani2014,oneill,oneill2007,ruiz2011,vanvyve,zak} and references therein. Research on the topic has been fostered by the ongoing liberalization and integration of electricity markets around the world during the past two decades. As electricity cannot be efficiently stored, non-convexities of production sets cannot be neglected, and bids introducing non-convexities in the mathematical formulation of the market clearing problem have been proposed for many years by most of power exchanges or electricity pools, allowing participants to reflect more accurately their operational constraints and cost structure.

It is now well known that due to these non-convexities, a market equilibrium with uniform prices may fail to exist (a single price per market area and time slot, no transfer payments, no losses incurred, and no excess demand nor excess supply for the given uniform market prices). To deal with this issue, almost all ideas proposed revolve around reaching back, or getting close to a convex situation where strong duality holds and shadow prices exist. For example, a now classic proposition in \cite{oneill} is to fix integer variables to optimal values for a welfare maximizing primal program whose constraints describe physically feasible dispatches of electricity, and compute multi-part equilibrium prices using dual variables of these fixing constraints. The same authors, in an unpublished working paper, have later adapted this proposition to the context of European power auctions, proposing to allow and compensate so-called paradoxically accepted block bids, thus deviating from a pure uniform price system. We review their proposition in section \ref{section-usingframework}. A recent proposition for electricity pools in \cite{ruiz2011} is to use a 'primal-dual' formulation (i.e. involving both executed quantities as primal variables, and market clearing prices as dual variables),  where 'getting close' is materialized by minimising the duality gap introduced by integer constraints, and where additional constraints  are added to ensure that producers are recovering their costs. The goal is to use uniform prices, while minimizing the inevitable deviation from market equilibrium, and providing adequate incentives to producers.

These last conditions, which have been used in Spain for many years, are usually called 'minimum income conditions' (MIC).
The natural way to model them is through imposing a lower bound on the revenue expressed as the product between executed quantities and market prices, yielding non-convex quadratic constraints. However, \cite{ruiz2009} propose an exact linearization of the revenue related to a set of bids of a strategic bidder participating to a convex market, relying on KKT conditions explicitly added to model the lower-level market clearing problem of a bilevel program, and linearized by introducting auxiliary binary variables.
 
Regarding the proposition in \cite{ruiz2011}, a market equilibrium exists only if the optimal duality gap is null, which is rarely the case with real instances, and the proposition choose  \emph{not} to enforce network equilibrium conditions (corresponding to optimality conditions of transmission system operators), nor that demand bids are not loosing money. The same remarks apply to \cite{GarciaBertrand}, where as non-convexities, only the minimum income conditions for producers are considered (not the indivisibilities).

Several other auction designs have been previously considered, which often propose to implement a non-uniform pricing scheme, see e.g. \cite{ruiz2011} for a review. 

The fact that equilibrium is not enforced for the convex part of the market clearing problem (i.e. for convex bids and the network model), is a key auction design difference compared with the choices made by power exchanges in Europe. In this article, we deal mainly with this last market model, which is further described in the next section, and illustrated in the toy example given in section \ref{subsec:toyexample}.

\subsection{The PCR market} \label{subsec:pcr}

We consider the Pan-European day-head electricity market being developed under the Price Coupling of Regions project (PCR), as publicly described in \cite{euphemia}. Essentially, it is a near-equilibrium auction mechanism using uniform prices, and where the only deviations from a perfect market equilibrium is the allowance of so-called "paradoxically rejected \emph{non-convex} bids" to which opportunity costs are hence incurred, because these bids, which are 'in-the-money', would be profitable for the computed clearing prices, see \cite{euphemia}. On the other side, all convex bids as well as TSOs must be 'at equilibrium' for the computed market clearing prices. This integrated market is coupling the CWE region (France, Belgium, Germany, the Netherlands, Luxembourg) with NordPool (Norway, Sweden, Denmark, Finland, Baltic countries), as well as Italy and OMIE (Spain, Portugal).

From the algorithmic point of view, when considering specifically the CWE region, we have previously shown that the market clearing problem can be restated as a MILP, without introducing any auxiliary variables to linearise the needed complementarity conditions modelling the near-equilibrium \cite{madani2014}. We also proposed a Benders-like decomposition procedure with locally strenghtened Benders cuts. Leaving aside a peculiar kind of bids from the Italian market (so-called PUN bids), introducing complex bids with a MIC condition yields a non-convex MINLP. The production-quality algorithm in use, EUPHEMIA (\cite{euphemia}), an extension of COSMOS previously used to clear the CWE market, is a sophisticated branch-and-cut algorithm handling all market requirements. However, due to the introduction of MIC bids, the algorithm is a heuristic, though COSMOS on which it relies is an exact branch-and-cut.

\subsection{Contribution and structure of this article} \label{subsec:contrib}

We provide here a new primal-dual framework for PCR-like auctions, which is mainly a continuation of ideas presented in \cite{madani2,madani2014}.  The objective is to present a unified approach to algorithmic and economic modelling issues concerning these European auctions, with useful computational applications. The approach essentially consists in using strong duality adequately to enforce complementarity conditions modelling equilibrium for the convex part of the market clearing problem, as required by European power exchanges. This approach is similar to the bilevel approach suggested in \cite{zak}, though we do not need to introduce any auxiliary variables to linearize quadratic terms when considering the equality of objective functions modelling optimality of the second level problem (i.e. equilibrium for the convex part). Moreover, we could choose to consider some additional continuous variables with clear economic interpretations as bounds on opportunity costs or losses of (non-convex) block bids.  This approach is used in \cite{madani2014} to derive a powerful Benders decomposition,   and also shows how to consider the case of piecewise linear bid curves yielding a QP setting, using strong duality for convex quadratic programs. The MIP framework proposed here is presented in section \ref{sec-framework}. It includes the following extensions.

First, so-called complex bids used in Spain and Portugal are added to the model, and we give an exact (i.e. not approximate) linearisation of a non-linear non-convex 'minimum income condition' (MIC) that must hold for these bids (\cite{euphemia}), which model revenue adequacy for producers. As mentioned above, this kind of conditions is provided for many years by the Spanish power exchange OMIE (\cite{omel}), and is also considered (in a different auction design setting) in \cite{GarciaBertrand,ruiz2011}. This enables us to give a MILP formulation of the PCR market clearing problem which avoids complementarity constraints and the use of any auxiliary variables, while taking into account these MIC conditions. This is developed in section \ref{section-mic}. 

Second, we show in section \ref{sec-framework} how to consider in the main MILP model, aside main decision variables such as prices and bid execution levels, additional variables which correspond to upper bounds on opportunity costs of block bids, and upper bounds on losses. Let us emphasize that current European market rules forbid paradoxically accepted block bids (executed bids incurring losses), and stating these particular conditions amount to requiring that some of the added variables must be null. This is used in section  \ref{section-usingframework} to develop economic analysis applications. For example, the framework can be used in particular  to provide the first (and reasonably tractable) MILP formulations of optimization problems such as the minimization of incurred opportunity costs, or the maximization of the traded volume. Let us note that in a convex context, no opportunity costs are incurred and any market clearing solution is welfare maximizing, so maximizing  the traded volume only amounts to  choose peculiar tie-breaking rules in case of indeterminacy. Yet it is shown on the toy example in the introductory section \ref{subsec:toyexample} that in a non-convex context,  these two objectives are both distinct from maximizing welfare.  To our knowledge, if opportunity costs of rejected block bids have been considered empirically in the past (e.g in \cite{Meeus2006,Meeus2009}), this point is new and could provide useful information to day-ahead auctions stakeholders. (We have presented partial results  about opportunity costs in a simplified setting at the EEM 14 conference, see \cite{madani2}.) Also, still using the added variables, we recover and present in a novel way properties of an alternative market model proposed in \cite{oneill2007}.

Finally, numerical experiments done on realistic large-scale instances are presented in section \ref{sec-numerical}. They show that our proposition allows to solve up to optimality market clearing instances with MIC bids, which correspond to the Spanish market design. 
This is the first time that real-life instances of this type of problems are solved to optimality. 
This straightforward approach does not behave as well for instances including both MIC and block bids.
However, a simple heuristic approach already yields provably high-quality solutions.
Regarding the economic analysis applications, results presented illustrate the trade-offs that may occur for realistic large-scale instances, for example between optimizing welfare and optimizing the traded volume. Again this is the first time that optimal solutions for these problems can be computed apart form toy examples of small sizes.

\section{A new primal-dual framework}\label{sec-framework}

Below, section \ref{subsec:toyexample} describes the context and issues for day-ahead markets with indivisibilities such as in the CWE region (with block bids). Then, section \ref{subsec:unrestricted-welfare} introduces the welfare maximization problem without equilibrium restrictions, i.e. neglecting adequate market clearing prices existence issues. Section \ref{subsec:duality-uniformprices} derives a related  dual program parametrized by the integer decisions, and several important economic interpretations relating dual variables, uniform prices and deviations from a perfect market equilibrium (losses and opportunity costs of executed/rejected non-convex bids). Finally, section \ref{subsec:newprimaldualframework} presents the basis of the new primal-dual framework proposed.

\subsection{Uniform prices and price-based decisions in the CWE region: a toy example}\label{subsec:toyexample}

We use here a toy example (\cite{madani2}) illustrating two key points. First, a market equilibrium may not exist in the presence of indivisible orders. Second, under European market rules where paradoxically rejected non-convex bids are allowed, a welfare maximizing solution is not necessarily a traded volume maximizing solution nor it is necessarily an opportunity costs minimizing solution. The toy example consists in a market clearing instance involving two demand continuous bids (e.g. two steps of a stepwise demand bid curve), and two sell block bids. Parameters are summarized in Table \ref{table1} :

\begin{table}[h]
\begin{center}
\begin{tabular}{|c|c|c|}
\hline 
Bids & Power (MW) & Limit price  (EUR/MW) \\ 
\hline 
A:  Buy bid 1 & 11 & 50 \\ 
\hline 
B:  Buy bid 2 & 14 & 10 \\ 
\hline 
C:  Sell block bid 1 & 10 & 5 \\ 
\hline 
D:  Sell block bid 2 & 20 & 10 \\ 
\hline 
\end{tabular} 
\end{center}
\caption{Toy market clearing instance}
\label{table1}
\end{table}

First, obviously, it is not possible to execute both sell block bids, as they offer a total amount of power of 30 MW, while the total demand is at most 25 MW. As they are indivisible, if there is a trade, either (i) bid C is fully executed or (ii) bid D is fully executed. Second, at equilibrium, by definition, for the given market prices, no bidder should prefer another level of execution of its bid. In particular, in-the-money (ITM) bids must be fully executed, out-of-the-money (OTM) bids must be  fully rejected, and fractionally executed bids must be right at-the-money (ATM). 

So in the first case (i), A is partially accepted and set the market clearing price to 50 EUR/MW, if any equilibrium with uniform prices exists. But in that case, block bid D is rejected while ITM: an opportunity cost of $20(50-10) = 800$ is incurred. This situation is accepted under the near-equilibrium European market rules described above. A direct computation shows that the welfare is then equal to 10(50-50) + 10(50-5) = 450, while the traded volume is 10. Similar computations in the case (ii) yield the  market outcome summarized in Table \ref{table2}. \


\begin{table}[h]
\begin{center}
\begin{tabular}{|c|c|c|c|c|}
\hline 
• & Price & Traded Volume & Welfare & Opportunity costs \\ 
\hline 
Matching C & 50 & 10 & 450 & 800 \\ 
\hline 
Matching D & 10 & 20 & 440 & 50 \\ 
\hline 
\end{tabular} 
\end{center}
\caption{Market outcome}
\label{table2}
\end{table}

In this toy example, case (i) maximizes welfare but generates (much) more opportunity costs and half the traded volume.

\subsection{Unrestricted welfare optimization}\label{subsec:unrestricted-welfare}
\label{sec:notation}

We formulate here the classical welfare optimization problem with an abstract and very general power transmission network representation that is still linear. 
It covers e.g. DC network flow models or the so-called ATC and Flow-based models used in PCR (see \cite{euphemia}). The usual network equilibrium conditions involving locational market prices apply, see \cite{madani2014}.

Binary variables $u_c$ are introduced to model the conditional acceptance of a set of hourly bids $hc \in H_c$, controlled via constraints (\ref{primal-eq3}). The conditional acceptance relative to a minimum income condition is dealt with in section \ref{section-mic}.

\section*{Notation}

Notation used throughout the text is provided here for quick reference. The interpretation of any other symbol is given within the text itself.

\medskip
\noindent
Sets and indices:

\begin{tabular}{ll}
$i$ & Index for hourly bids, in set $I$ \\ 
$j$ & Index for block bids, in set $J$ \\ 
$c$ & Index for MIC bids, in set $C$ \\
$hc$ & Index for hourly bids associated to the MIC bid $c$, in set $H_c$ \\
$l$ & Index for locations, $l(i)$ (resp. $l(hc)$) denotes the location of bid $i$ (resp. $hc$)\\
$t$ & Index for time slots, $t(i)$ (resp. $t(hc)$) denotes the time slot of bid $i$, (resp. $hc$) \\
$I_{lt} \subseteq I$ & Subset of hourly bids associated to location $l$ and time slot $t$ \\
$HC_{lt} \subseteq HC$  & Subset of MIC hourly suborders, associated to location $l$ and time slot $t$ \\
$J_l \subseteq J$ & Subset of block bids associated to location $l$ 
\end{tabular} 

\bigskip

Parameters:

\begin{tabular}{ll}
$P_i, P_{hc}$ & Power amount of hourly bid $i$ (resp. $hc$), \\ &  $P<0$ for sell bids, and $P>0 $ for demand bids \\
$P^t_j$ & Power amount of block bid $j$ at time $t$, same sign convention \\
$\lambda^i, \lambda^{hc}$ & Limit bid price of hourly bid $i$, $hc$\\
$\lambda^j$ & Limit bid price of block bid $j$ \\
$a_{m,k}$ & Abstract linear network representation parameters 
\end{tabular} 

\bigskip

Primal decision variables:

\begin{tabular}{ll}
$x_i \in [0,1]$ & fraction of power $ P_i $ which is executed \\
$x_{hc} \in [0,1]$ & fraction of power $P_{hc}$ (related to the MIC bid $c$) which is executed\\
$y_j \in \{0,1\}$ & binary variable which determines if the quantities  $P^t_j$ are fully accepted or rejected \\
$u_c \in \{0,1\}$  & binary variable controling the execution or rejection of the MIC bid $c$ \\ & (i.e. of the values of $x_{hc}$) \\
$n_k$ & variables used for the abstract linear network representation, related to net export positions \\
\end{tabular} 

\bigskip

Dual decision variables:

\begin{tabular}{ll}
$\pi_{lt}$ & uniform price (locational marginal price) for power in location $l$ and time slot $t$ \\
$v_m \geq 0$ & dual variable pricing the network constraint $m$, \\
$s_{i} \geq 0$ & dual variable interpretable as the surplus associated to the execution of bid $i \in I$\\
$s_{j} \geq 0$ & dual variable interpretable as the surplus associated to the execution of bid $j \in J$\\
$s_{hc} \geq 0$ & dual variable interpretable as the (potential) surplus associated to the execution of bid $hc$\\
$s_{c} \geq 0$ & dual variable interpretable as the surplus associated to the execution of the MIC bid $c$\\

\end{tabular}

\begin{multline}\label{primal-obj}
\max_{x,y,u,n} \sum_{i} (\lambda^{i}P^{}_{i})x_i + \sum_{c, h\in H_c} (\lambda^{hc}P^{}_{hc})x_{hc}  +  \sum_{j,t} (\lambda^{j}P^{t}_{j})y_j 
\end{multline}

subject to:

\begin{align}
&x_i \leq 1 & \forall i \in I \ & [s_i]\label{primal-eq1} \\
&y_j \leq 1 & \forall j \in J \ & [s_j]\label{primal-eq2} \\
&x_{hc} \leq u_{c} & \forall h \in H_c, c\in C \ & [s_{hc}] \label{primal-eq3} \\
&u_{c} \leq 1 &\forall c \in C &[s_{c}] \label{primal-eq4} \\
&\sum_{i \in I_{lt}}P^{}_{i}x_i + \sum_{j \in J_l}P^{t}_{j}y_j  + \sum_{hc\in HC_{lt}} P^{}_{hc}x_{hc} \nonumber\\ & \hspace{5cm}   = \sum_{k} e^k_{l,t} n_k, \ &  \forall (l,t) \ \   & [\pi_{l,t}]\label{primal-eq5} \\
&\sum_{k} a_{m,k} n_k  \leq w_{m}\ & \forall m \in N \ \ & [v_{m}]\label{primal-eq6}  \\                  
&x, y, u\geq 0, \label{primal-eq7} \\
&y,u \in \mathbb{Z} \label{primal-eq8}
\end{align}
Constraint \eqref{primal-eq5} is the balance equation at location $l$ at time $t$. Constraint \eqref{primal-eq6} is the capacity constraint of network element $m$. 

\subsection{Duality, uniform prices and opportunity costs}\label{subsec:duality-uniformprices}\label{subsec-duality-econ}

Let us now consider partitions $J=J_r \cup J_a$, $C=C_r \cup C_a$, and the following constraints, fixing all integer variables to some arbitrarily given values:

\begin{align}
&-y_{j_a} \leq -1 \qquad & \forall j_a  \in J_a \subseteq J &  [d^a_{j_a}] \label{fix-eq1}  \\
& y_{j_r} \leq 0 \qquad & \forall j_r \in J_r \subseteq J &  [d^r_{j_r}] \label{fix-eq2} \\
&- u_{c_a} \leq -1 \qquad & \forall c_a \in C_a\subseteq C & [du^a_{c_a}] \label{fix-eq3} \\
&u_{c_r} \leq 0 \qquad & \forall c_r \in C_r \subseteq C &[du^r_{c_r}] \label{fix-eq4}
\end{align}

Dropping integer constraints (\ref{primal-eq8}) not needed any more, this yields an LP whose dual is:

\begin{multline}\label{dual-obj}
\min \sum_{i} s_i + \sum_{j} s_j +\sum_c s_c + \sum_m w_m v_m  - \sum_{j_a \in J_a} d^a_{j_a} - \sum_{c_a \in C_a} d^a_{c_a}  
\end{multline}

\text{subject to: }
\begin{align}
&s_i + P^{}_i \pi_{l(i),t(i)} \geq P^{}_i \lambda^{i} , &  \forall i  \qquad [x_i] \label{dual-eq1} \\
&s_{hc} + P^{}_{hc}\pi_{l(hc),t(hc)} \geq P^{}_{hc}\lambda^{hc}, & \forall h \in H_c, c \ [x_{hc}] \label{dual-eq2} \\
&s_{j_r} + d^r_{j_r}+ \sum_t P^{t}_{j_r}\pi_{l(j_r),t} \geq P^{}_{j_r}\lambda^{j_r}, & \forall j_r \in J_r [y_{j_r}] \label{dual-eq3} \\
&s_{j_a} -  d^a_{j_a} + \sum_t P^{t}_{j_a}\pi_{l(j_a),t} \geq P^{}_{j_a}\lambda^{j_a}, & \forall j_a \in J_a \ [y_{j_a}] \label{dual-eq4} \\
&s_{c_r} + du^r_{c_r} \geq \sum_{h \in H_{c_r}} s_{hc_r}, &  \forall c_r \in C_r  \ [u_{c_r}] \label{dual-eq5} \\
&s_{c_a} - du^a_{c_a} \geq \sum_{h \in H_{c_a}} s_{hc_a}, &  \forall c_a \in C_a  \ [u_{c_a}] \label{dual-eq6}  \\
&\sum_m a_{m,k} v_m- \sum_{l,t}e^k_{l,t} \pi_{l,t} = 0 & \forall k \in K \ [n_k] \label{dual-eq7}\\
&s_i, s_j,s_c, s_{hc}, d^r_{j_r}, d^a_{j_a},du^r_{c_r},du^a_{c_a}, v_m \geq 0 \label{dual-eq8}
\end{align}

We now write down the complementarity constraints corresponding to these primal an dual programs parametrized by the integer decisions. Economic interpretations are stated afterwards:

\begin{align}
&s_i(1-x_i)=0 &\ \forall i \in I \label{cc-eq1}\\
&s_{j}(1-y_{j})=0 &\ \forall j \in J \label{cc-eq2}\\
&s_{hc}(u_{c}-x_{hc})=0 &\ \forall h,c \label{cc-eq3}\\
&s_{c}(1-u_{c})=0 &\forall c \in C \label{cc-eq4} \\
&v_m(\sum_k a_{m,k} n_k - w_m)=0 &\ \forall m \in N \label{cc-eq5}\\
&(1-y_{j_a})d^a_{j_a}=0 &\ \forall j_a \in J_a \label{cc-eq9}\\
&y_{j_r}d^r_{j_r}=0 &\ \forall j_r \in J_r \label{cc-eq8} \\
&(1-u_{c_1})du^a_{c_1}=0 & \forall c_1 \in C_1 \label{cc-eq7} \\
&u_{c_r}du^r_{c_r}=0 & \forall c_r \in C_r  \label{cc-eq6} \\
&x_i(s_i  + P^{}_{i}\pi_{l(i),t(i)} - P^{}_{i}\lambda^{i})=0 &\ \forall i \in I \label{cc-eq10}\\
&x_{hc}(s_{hc}  + P^{}_{hc}\pi_{l(hc),t(hc)} - P^{}_{hc}\lambda^{hc})=0 &\ \forall h,c \label{cc-eq11}\\
&y_{j_r}(s_{j_r} + d^r_{j_r} + \sum_t P^{t}_{j_r}(\pi_{l(j_r),t} - \lambda^{j_r}))=0 & \forall j_r \in J_r \label{cc-eq12} \\
&y_{j_a}(s_{j_a} - d^a_{j_a} + \sum_t P^{t}_{j_a}(\pi_{l(j_a),t} - \lambda^{j_a}))=0 & \forall j_a \in J_a \label{cc-eq13} \\
&u_{c_r}(s_{c_r} + du^r_{c_r} - \sum_{h \in H_{c_r}} s_{hc_r} ) =0 & \forall c_r \in C_r \label{cc-eq14} \\
&u_{c_a}(s_{c_a} - du^a_{c_a} - \sum_{h \in H_{c_a}} s_{hc_a} ) =0 & \forall c_a \in C_a \label{cc-eq15}
\end{align}

\begin{lemma}[Economic interpretation of $d^a, d^r$ \cite{madani2}] Take a pair of points $(x,y,u,n)$ and $(s,\pi_{l,t},d^a,d^r, du^a, du^r)$ respectively satisfying primal conditions (\ref{primal-eq1})-(\ref{fix-eq4}) and dual conditions (\ref{dual-eq1})-(\ref{dual-eq8}), such that complementarity constraints (\ref{cc-eq1})-(\ref{cc-eq15}) are satisfied. For the uniform prices $\pi_{l,t}$: (i) $d^a_{j_a}$  is an upper bound on the actual loss (if any) $\min [0, \sum_t P^{t}_{j_a}(\lambda^{j_a}-\pi_{l(j_a),t})]$ of the executed block order $j_a$, (ii) $d^r_{j_r}$ is an upper bound on the opportunity cost $\max [0, \sum_t P^{t}_{j_r}(\lambda^{j_r}-\pi_{l(j_r),t})]$ of the rejected order $j_r$. \label{lemma-dadr} \end{lemma}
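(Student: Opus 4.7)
The plan is to treat parts (i) and (ii) separately, in each case combining the fixing constraints (\ref{fix-eq1})--(\ref{fix-eq2}) (which pin down $y_j$), exactly one dual feasibility constraint among (\ref{dual-eq3})--(\ref{dual-eq4}), the matching complementarity condition, and the sign constraints from (\ref{dual-eq8}). Which complementarity condition carries the useful information depends on whether $y_j$ is forced to $0$ or to $1$, so the two cases will end up looking slightly different.

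For part (ii), I would first note that (\ref{fix-eq2}) together with $y_{j_r}\geq 0$ forces $y_{j_r}=0$, so the complementarity condition (\ref{cc-eq2}) collapses to $s_{j_r}=0$. Substituting this into the dual feasibility constraint (\ref{dual-eq3}) and isolating $d^r_{j_r}$ gives
\begin{equation*}
d^r_{j_r}\;\geq\;\sum_t P^{t}_{j_r}\bigl(\lambda^{j_r}-\pi_{l(j_r),t}\bigr),
\end{equation*}
after which combining with $d^r_{j_r}\geq 0$ from (\ref{dual-eq8}) yields $d^r_{j_r}\geq\max\bigl[0,\sum_t P^{t}_{j_r}(\lambda^{j_r}-\pi_{l(j_r),t})\bigr]$, which is precisely the opportunity cost of the rejected order.

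For part (i), the situation is symmetric but requires using the complementarity condition as an equality rather than the one-sided dual inequality. The fixing constraint (\ref{fix-eq1}) and integrality give $y_{j_a}=1$, after which (\ref{cc-eq13}) becomes
\begin{equation*}
s_{j_a}-d^a_{j_a}+\sum_t P^{t}_{j_a}\bigl(\pi_{l(j_a),t}-\lambda^{j_a}\bigr)=0.
\end{equation*}
Solving for $d^a_{j_a}$ and then using $s_{j_a}\geq 0$ together with $d^a_{j_a}\geq 0$, I would conclude $d^a_{j_a}\geq -\min\bigl[0,\sum_t P^{t}_{j_a}(\lambda^{j_a}-\pi_{l(j_a),t})\bigr]$, which is the magnitude of the loss (if any) on the executed block.

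The calculations above are short and essentially mechanical; the main subtlety to flag is that for accepted blocks one genuinely needs the equality coming from (\ref{cc-eq13}) rather than the inequality (\ref{dual-eq4}), because dual feasibility alone bounds $d^a_{j_a}$ from \emph{above}, not from below, and without the complementarity condition the desired lower bound cannot be extracted. A secondary point is the sign convention $P^{t}_j<0$ for sell bids and $P^{t}_j>0$ for demand bids, which is what makes the expressions above coincide uniformly with the usual economic notions of loss and opportunity cost regardless of whether the block is a sell or a demand order.
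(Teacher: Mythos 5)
Your proof is correct and follows essentially the same route as the paper: for rejected blocks, use $y_{j_r}=0$ with (\ref{cc-eq2}) to get $s_{j_r}=0$ and read the bound off (\ref{dual-eq3}) together with $d^r_{j_r}\geq 0$; for accepted blocks, use the equality from (\ref{cc-eq13}) with $s_{j_a}\geq 0$. Your closing remark about why the complementarity \emph{equality} (rather than dual feasibility (\ref{dual-eq4})) is indispensable for part (i) is a valid observation that the paper leaves implicit.
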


\begin{proof}
(i) Conditions (\ref{cc-eq13}) show that for an accepted block $y_{j_a}=1$, we have $s_{j_a} - d^a_{j_a} = \sum_t P^{t}_{j_a}(\lambda^{j_a}-\pi_{l(j_a),t})$, the right-hand side corresponding to the gain (if positive) or loss (if negative) of the bid. As $s_{j_a} \geq 0$, the loss is bounded by $d^a_{j_a}$.

(ii) For a rejected block bid, $y_{j_r}=0$, and conditions (\ref{cc-eq2}) imply $s_{j_r} = 0$, which used in dual conditions (\ref{dual-eq3}) directly yields the result, as $d^r_{j_r} \geq 0$.
\end{proof}


The following lemma proposes analogous interpretations for the case of MIC orders. Intuitively, neglecting for now the so-called MIC condition, the shadow cost of forcing a MIC order to be rejected (given by $du^r$)  is at least equal to the sum of all maximum missed surpluses generated by its hourly suborders at the given market prices, while there is no 'shadow cost' at forcing it to be accepted (its suborders are then cleared using standard rules for hourly bids):

\begin{lemma}[Interpretation of $du^a, du^r$] \label{lemma-duadur}
(i) $du^r_{c_r}$ is an upper bound on the sum of maximum missed individual hourly surpluses $\sum_{h\in H_{c_r}} ( \max[0,P^{}_{hc_r}(\lambda^{hc_r} - \pi_{l(hc_r),t(hc_r)})])$ of the rejected MIC order $c_r$. (ii) We can always assume $du^a_{c_a}=0, \forall c_a \in C_a $.
\end{lemma}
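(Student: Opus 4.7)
My plan is to treat the two statements separately. Both are direct consequences of chaining the relevant dual feasibility inequalities with the appropriate complementarity conditions, so neither requires new ideas beyond what was used to prove Lemma~\ref{lemma-dadr}; the only subtlety is identifying which complementarity relations are active for a rejected (resp.\ accepted) MIC order.

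For part (i), I would start from the observation that because $c_r \in C_r$, the fixing constraint (\ref{fix-eq4}) and the relaxation of (\ref{primal-eq8}) force $u_{c_r}=0$, hence by (\ref{primal-eq3}) also $x_{hc_r}=0$ for every $h\in H_{c_r}$. Complementarity (\ref{cc-eq4}) with $u_{c_r}=0$ gives $s_{c_r}=0$, so dual constraint (\ref{dual-eq5}) collapses to $du^r_{c_r}\geq \sum_{h\in H_{c_r}} s_{hc_r}$. It then suffices to bound each $s_{hc_r}$ below by the missed individual hourly surplus. But dual inequality (\ref{dual-eq2}) rearranges as $s_{hc_r}\geq P_{hc_r}(\lambda^{hc_r}-\pi_{l(hc_r),t(hc_r)})$, and combined with the sign condition $s_{hc_r}\geq 0$ from (\ref{dual-eq8}) yields $s_{hc_r}\geq \max[0,\,P_{hc_r}(\lambda^{hc_r}-\pi_{l(hc_r),t(hc_r)})]$. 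Summing over $h\in H_{c_r}$ and chaining with the previous inequality produces the claimed upper bound.

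For part (ii), the plan is to show that from any primal-dual-complementary pair one can construct another one (for the same integer decisions) with $du^a_{c_a}=0$, without altering either the dual objective value or any of the other variables. The key structural observation is that the pair $(s_{c_a},\,du^a_{c_a})$ enters dual constraint (\ref{dual-eq6}) and complementarity (\ref{cc-eq15}) only through the difference $s_{c_a}-du^a_{c_a}$; simultaneously decreasing both by the same quantity $\Delta$ therefore preserves feasibility of (\ref{dual-eq6}), as well as (\ref{cc-eq15}) and (\ref{cc-eq7}) (the latter being trivial since $u_{c_a}=1$). The same shift preserves (\ref{cc-eq4}) trivially. In the dual objective the coefficient of $s_{c_a}$ is $+1$ and that of $du^a_{c_a}$ is $-1$, so the two changes cancel and the objective is unaffected. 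Choosing $\Delta = du^a_{c_a}$ drives $du^a_{c_a}$ to zero; it remains to verify the sign constraint $s_{c_a}-\Delta\geq 0$, which follows because cc~(\ref{cc-eq15}) for $u_{c_a}=1$ yields $s_{c_a}=du^a_{c_a}+\sum_{h\in H_{c_a}}s_{hc_a}\geq du^a_{c_a}$.

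The step I expect to require the most care is bookkeeping in part~(ii): one must check that \emph{every} place where $s_{c_a}$ or $du^a_{c_a}$ appears (dual constraints, sign conditions, the five complementarity identities touching these indices, and the objective coefficient that is written as $-d^a_{c_a}$ in (\ref{dual-obj})) is either invariant under the shift or trivially satisfied because $u_{c_a}=1$. Once this is confirmed, both statements follow without any further machinery.
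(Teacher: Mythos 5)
Your proposal is correct and follows essentially the same route as the paper: part (i) chains $s_{c_r}=0$ from (\ref{cc-eq4}) with the lower bound $s_{hc_r}\geq\max[0,P_{hc_r}(\lambda^{hc_r}-\pi_{l(hc_r),t(hc_r)})]$ from (\ref{dual-eq2}) and (\ref{dual-eq8}) inside (\ref{dual-eq5}), and part (ii) is the paper's change of variables $\tilde{s}_{c_a}:=s_{c_a}-du^a_{c_a}$, $\tilde{du^a_{c_a}}:=0$. The only (immaterial) difference is that you justify $s_{c_a}-du^a_{c_a}\geq 0$ via the complementarity identity (\ref{cc-eq15}), whereas the paper reads it directly off the dual inequality (\ref{dual-eq6}) together with $s_{hc_a}\geq 0$.
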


\begin{proof}
(i) Conditions of type (\ref{cc-eq4}) show that $s_{c_r}=0$, while (\ref{dual-eq2}) and (\ref{dual-eq8}) give $s_{hc_r} \geq \max [0, P^{}_{hc_r}(\lambda^{hc_r} - \pi_{l(hc_r),t(hc_r)})]$. Using these two facts in (\ref{dual-eq5}) provides the result.

(ii) As $\forall h \in H_c, s_{hc} \geq 0 $, using (\ref{dual-eq6}), it follows that $\forall c_a \in C_a, s_{c_a} - du^a_{c_a} \geq 0$. Then, we can pose $\tilde{du^a_{c_a}}:=0$ and make a change of variable $\tilde{s_{c_a}}:=s_{c_a} - du^a_{c_a}$ in (\ref{primal-eq1})-(\ref{cc-eq15}) (systems of conditions equivalent in the usual sense).
\end{proof}

\subsection{The new primal-dual framework}\label{subsec:newprimaldualframework}

This new 'primal-dual approach' makes use of an equality of objective functions (\ref{primaldual-eqobj}) to enforce all the economically meaningful complementarity conditions (\ref{cc-eq1})-(\ref{cc-eq15}). Leaving aside for the time being the question of MIC bid selections, which are dealt with in section \ref{section-mic}, the problem is that we don't know a priori what is the best block bid selection $J = J_r \cup J_a$. However, the feasible set UMFS described below allows to determine the optimal block bid selection, whatever the desired objective function is, and the pair of optimal points for the corresponding primal and dual programs stated above. This is formalised in Theorem \ref{maintheorem} and helps to consider many interesting issues (welfare or traded volume maximization, minimization of opportunity costs, etc), in a computationally efficient way. This is also a key step towards the main extension presented in next section, proposing an exact linearisation to deal with MIC bids using a MILP formulation.

In theory, admissible market clearing prices may lie outside the price range allowed for bids, see \cite{oneill2005}. 
For modelling purposes, we need to include the following technical constraint limiting the market price range
\begin{equation}\label{pricerangecondition}
\pi_{l,t} \in [-\bar{\pi}, \bar{\pi}] \qquad  \forall l\in L, t\in T.
\end{equation}
$\bar{\pi}$ can be choosen large enough to avoid excluding any relevant market clearing solution (see \cite{madani2014}). Note that in practice, power exchanges actually do impose that the computed prices $\pi_{l,t}$ stay within a given range in order to limit market power and price volatility.

\textbf{Uniform Market Clearing Feasible Set (UMFS):}

\begin{multline}\label{primaldual-eqobj}
\sum_{i} (\lambda^{i}P^{}_{i})x_i + \sum_{c, h\in H_c} (\lambda^{hc}P^{}_{hc})x_{hc}  +  \sum_{j,t} (\lambda^{j}P^{t}_{j})y_j  \\ \geq  \sum_{i} s_i + \sum_{j} s_j +\sum_c s_c  - \sum_{j \in J} d^a_{j} + \sum_m w_m v_m
\end{multline}

\begin{align}
&x_i \leq 1 & \forall i \in I  [s_i]\label{primaldual-eq1} \\
&y_j \leq 1 & \forall j \in J \ [s_j]\label{primaldual-eq2} \\
&x_{hc} \leq u_{c} & \forall h \in H_c, c\in C \ [s_{hc}] \label{primaldual-eq3} \\
&u_{c} \leq 1 &\forall c \in C [s_{c}] \label{primaldual-eq4} \\
&\sum_{i \in I_{lt}}P^{}_{i}x_i + \sum_{j \in J_l}P^{t}_{j}y_j  + \sum_{hc\in HC_{lt}} P^{}_{hc}x_{hc} \nonumber\\  & \hspace{ 4cm}= \sum_{k} e^k_{l,t} n_k, \ &  \forall (l,t) \ [\pi_{l,t}]\label{primaldual-eq5} \\
&\sum_{k} a_{m,k} n_k  \leq w_{m}\ & \forall m \in N \ \  [v_{m}]\label{primaldual-eq6}  \\                  
&x, y, u\geq 0, \label{primaldual-eq7} \\
&y,u \in \mathbb{Z} \label{primaldual-eq8} \\
&s_i + P^{}_i \pi_{l(i),t(i)} \geq P^{}_i \lambda^{i} , &  \forall i  \qquad [x_i] \label{primaldual-eq9} \\
&s_{hc} + P^{}_{hc}\pi_{l(hc),t(hc)} \geq P^{}_{hc}\lambda^{hc}, & \forall h \in H_c, c \ [x_{hc}] \label{primaldual-eq10} \\
&s_{j} + d^r_{j} - d^a_{j} + \sum_t P^{t}_{j}\pi_{l(j),t} \geq P^{}_{j}\lambda^{j}, & \forall j \in J \ [y_{j}] \label{primaldual-eq11}\\
 &(s_{c} + du^r_{c}) \geq \sum_{h \in H_{c}} s_{hc} & \forall c \in C [u_{c}] \label{primaldual-eq12} \\
&d^r_{j} \leq M_j (1-y_j) &\forall j \in J \label{primaldual-eq13} 
\end{align}
\begin{align}
&du^r_{c} \leq M_c (1-u_{c}) &\forall c\in C  \label{primaldual-eq14} \\
&d^a_{j} \leq M_j\ y_j &\forall j \in J \label{primaldual-eq15} \\
&du^a_{c} =0 &\forall c\in C \label{primaldual-eq16} \\
&\sum_m a_{m,k} v_m- \sum_{l,t}e^k_{l,t} \pi_{l,t} = 0 &\ \forall k \in K  [n_k] \label{primaldual-eq17}\\
&s_i, s_j,s_c,s_{hc}, d^a,d^r, du^a, du^r, v_m \geq 0 \label{primaldual-eq18}
\end{align}

Constants $M_j$ are choosen large enough in Constraints (\ref{primaldual-eq13}), (\ref{primaldual-eq15}) so that Constraints (\ref{primaldual-eq11}) are not restraining the range $[-\bar{\pi}, \bar{\pi}]$ of possible values for $\pi_{l,t}$ (or the possibility to paradoxically reject and accept block bids). They must indeed correspond to the maximum opportunity cost in conditions (\ref{primaldual-eq13}), or loss in conditions (\ref{primaldual-eq15}), that could be incurred to a block bid, for clearing prices in the allowed range. For this purpose, assuming that both the bid and market clearing prices satisfy (\ref{pricerangecondition}), it is sufficient to set $M_j:=K\sum_{t}|P^{t}_{j}|$ with $K=2\bar{\pi}$. This value of $K$ could be improved. For example, in constraints (\ref{primaldual-eq13}), one could set $K:=(\bar{\pi} - \lambda^j)$ for a sell block bid, and $K:=(\lambda^j - (-\bar{\pi}))$ for a buy block bid. Values of the $M_c$ are determined similarly with respect to (\ref{primaldual-eq14}) and condition (\ref{primaldual-eq12}): they must be such that the value of $s_c$ can be null whatever the values of the $s_{hc}$ are. Economically, the surplus $s_c$ of a rejected MIC bid will be null even if the \emph{potential} surpluses $s_{hc}$ of its suborders are not.

Also, we have made use of Lemma \ref{lemma-duadur} to set, without loss of generality, all the variables $du^a_c:=0$ in UMFS. This is clarified in the proof of Theorem \ref{maintheorem}.

\begin{theorem}\label{maintheorem}
(I) Let $(x,y,u,n,\pi,v,s,d^a,d^r, du^a, du^r)$ be any feasible point of UMFS satisfying the price range condition (\ref{pricerangecondition}), and let us define $J_r=\{j| y_j=0\},J_a=\{j | y_j=1\}, C_r=\{c| u_c=0\},C_a=\{c | u_c=1\}$. 

Then the projection $(x,y,u,n,\pi,v,s,d^a_{j_a \in J_a},d^r_{j_r \in J_r}, du^a_{c_a \in C_a}, du^r_{c_r \in C_r})$ satisfies all conditions in (\ref{primal-eq1})-(\ref{cc-eq15}). 

(II) Conversely, any point 

$MCS = (x,y,u,n,\pi,v,s,d^a_{j_a \in J_a},d^r_{j_r \in J_r}, du^a_{c_a \in C_a}, du^r_{c_r \in C_r})$ feasible for constraints (\ref{primal-eq1})-(\ref{cc-eq15}) related to a given arbitrary block order selection $J=J_r\cup J_a$ and MIC selection $C = C_r \cup C_a$ which respects the price range condition (\ref{pricerangecondition}) can be ‘lifted’ to obtain a feasible point $\tilde{MCS} = (x,y,u,n,\pi,v,\tilde{s},\tilde{d^a},\tilde{d^r}, \tilde{du^a}, \tilde{du^r})$ of UMFS.
\end{theorem}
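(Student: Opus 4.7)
The overall strategy is to exploit LP strong duality: the constraints (\ref{primaldual-eq1})-(\ref{primaldual-eq8}) are exactly the primal constraints (\ref{primal-eq1})-(\ref{primal-eq8}), while (\ref{primaldual-eq9})-(\ref{primaldual-eq12}), (\ref{primaldual-eq17}), (\ref{primaldual-eq18}) match the dual constraints (\ref{dual-eq1})-(\ref{dual-eq8}) of the integer-fixed LP once one matches the $d^a, d^r, du^a, du^r$ variables to the partitions $J=J_r\cup J_a$, $C=C_r\cup C_a$. The whole content of the theorem is therefore that (i) the big-M constraints (\ref{primaldual-eq13})-(\ref{primaldual-eq16}) correctly enforce that these sign-of-deviation duals are supported on the ``right'' partition blocks, and (ii) the inequality of objectives (\ref{primaldual-eqobj}) is in fact an equality (by weak LP duality), and this equality is equivalent to the full list of complementarity conditions (\ref{cc-eq1})-(\ref{cc-eq15}).

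For direction (I), I would first fix $J_a,J_r,C_a,C_r$ as induced by $y,u$ and read off the fixing constraints (\ref{fix-eq1})-(\ref{fix-eq4}) directly. Next I would observe that (\ref{primaldual-eq13}) together with $y_j\in\{0,1\}$ forces $d^r_{j_a}=0$ for $j_a\in J_a$, (\ref{primaldual-eq15}) forces $d^a_{j_r}=0$ for $j_r\in J_r$, (\ref{primaldual-eq14}) forces $du^r_{c_a}=0$ for $c_a\in C_a$, and (\ref{primaldual-eq16}) kills all $du^a_c$. Thus (\ref{primaldual-eq11}) specializes to (\ref{dual-eq3}) on $J_r$ and to (\ref{dual-eq4}) on $J_a$, and (\ref{primaldual-eq12}) specializes to (\ref{dual-eq5}) on $C_r$ and to (\ref{dual-eq6}) on $C_a$ (with $du^a_{c_a}=0$). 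On the objective side, these same support conditions give $\sum_{j\in J}d^a_j=\sum_{j_a\in J_a}d^a_{j_a}$ and kill $\sum_{c_a}du^a_{c_a}$, so the RHS of (\ref{primaldual-eqobj}) equals the dual objective (\ref{dual-obj}); weak duality applied to the fixed-integer LP then forces equality, which by standard LP strong duality is equivalent to all complementarity conditions (\ref{cc-eq1})-(\ref{cc-eq15}) holding simultaneously.

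For direction (II), given $MCS$ satisfying (\ref{primal-eq1})-(\ref{cc-eq15}) and the price range (\ref{pricerangecondition}), I define the lift by padding with zeros: $\tilde d^r_{j_a}:=0$, $\tilde d^a_{j_r}:=0$, $\tilde{du}^r_{c_a}:=0$, and, invoking Lemma~\ref{lemma-duadur}(ii) to assume $du^a_{c_a}=0$ already, set $\tilde{du}^a_c:=0$ for all $c$. The delicate point is checking the big-M constraints (\ref{primaldual-eq13})-(\ref{primaldual-eq15}) on the originally nonzero coordinates $d^a_{j_a}, d^r_{j_r}, du^r_{c_r}$; here I use Lemma~\ref{lemma-dadr} (and its MIC analogue Lemma~\ref{lemma-duadur}(i)) together with (\ref{pricerangecondition}) to bound each of these duals by the corresponding opportunity cost / loss, itself at most $K\sum_t|P^t_j|$, matching the stated choice of $M_j$ (and similarly for $M_c$). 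The remaining primal-dual constraints (\ref{primaldual-eq11}), (\ref{primaldual-eq12}) collapse exactly onto (\ref{dual-eq3})-(\ref{dual-eq6}) under the lift, and the objective inequality (\ref{primaldual-eqobj}) becomes an equality by LP strong duality applied to the fixed-integer LP, since the starting $MCS$ satisfies all complementarity conditions.

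The main obstacle I expect is bookkeeping: the UMFS variables $d^a_j,d^r_j,du^r_c$ are defined over all of $J$ (resp.\ $C$) while the objectives and dual constraints of Section~\ref{subsec-duality-econ} only reference the ``correct'' subset, so I must be careful to check that the big-M constraints together with integrality of $y,u$ genuinely force the ``wrong'' coordinates to zero in both the constraint rows \emph{and} in the summation defining the RHS of (\ref{primaldual-eqobj}). Once that reduction is made, the rest of the argument is the textbook strong-duality / complementary-slackness equivalence for the integer-fixed LP.
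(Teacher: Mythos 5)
Your direction (I) is essentially the paper's own argument: the big-$M$ constraints ``dispatch'' (\ref{primaldual-eq11})--(\ref{primaldual-eq12}) onto (\ref{dual-eq3})--(\ref{dual-eq6}) according to the values of $y,u$, the objective inequality (\ref{primaldual-eqobj}) combined with weak duality gives equality of (\ref{primal-obj}) and (\ref{dual-obj}), and the equivalence with (\ref{cc-eq1})--(\ref{cc-eq15}) is exactly the paper's Observation~\ref{observation-1}. That part is fine.

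Direction (II) has a genuine gap. You propose to lift by padding the missing coordinates with zeros and then to verify the big-$M$ constraints on the \emph{retained} coordinates $d^a_{j_a}, d^r_{j_r}, du^r_{c_r}$ by ``using Lemma~\ref{lemma-dadr} \dots to bound each of these duals by the corresponding opportunity cost / loss.'' But Lemma~\ref{lemma-dadr} states the inequality in the opposite direction: $d^a_{j_a}$ (resp.\ $d^r_{j_r}$) is an upper bound \emph{on} the loss (resp.\ opportunity cost), i.e.\ it is at least that quantity; nothing in (\ref{primal-eq1})--(\ref{cc-eq15}) bounds it from above. Indeed, for $j_r \in J_r$ the constraints (\ref{dual-eq3}), (\ref{cc-eq8}) and (\ref{cc-eq12}) only bound $d^r_{j_r}$ from below (the complementarity products vanish because $y_{j_r}=0$), so a feasible $MCS$ may have $d^r_{j_r}$ arbitrarily large and violate (\ref{primaldual-eq13}); similarly, for $j_a \in J_a$ only the difference $s_{j_a}-d^a_{j_a}$ is pinned down by (\ref{dual-eq4}) and (\ref{cc-eq13}), so both $s_{j_a}$ and $d^a_{j_a}$ can blow up together and violate (\ref{primaldual-eq15}). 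The lift therefore cannot keep these coordinates unchanged: as in the paper's proof, one must \emph{redefine} them --- set $d^r_{j_r}$ to the minimal value making (\ref{dual-eq3}) hold with $s_{j_r}=0$ (which the price range condition (\ref{pricerangecondition}) and the choice of $M_j$ then bound by $M_j$), and shift $s_{j_a}$ and $d^a_{j_a}$ simultaneously, preserving $s_{j_a}-d^a_{j_a}$ so that neither (\ref{dual-eq4}) nor the objective equality is disturbed, until $d^a_{j_a}\le M_j$ --- and analogously for $du^r_{c_r}$ via $M_c$. This redefinition step, together with the check that it leaves all other constraints and the objective identity intact, is the actual content of part (II) and is what your argument as written does not supply.
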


\begin{proof}
See appendix.
\end{proof}

\section{Including MIC bids}\label{section-mic}

\subsection{Complex orders with a minimum income condition}\label{subsec:mic-description}

A MIC order is basically a set of hourly orders with the classical clearing rules but with the additional condition that a given 'minimum income condition' must be satisfied. Otherwise, all hourly bids associated to the given MIC bid are rejected, even if some of them are ITM. The minimum income condition of the MIC order $c$ ensures that some fixed cost $F_c$ together with a variable cost $V_c \times P_c$ are recovered, where $P_c$ is the total executed quantity related to the order $c$, and $V_c$ a given variable cost.

With the notation described in Section \ref{sec:notation}, the minimum income condition for a MIC bid indiced by $c$ has the form:

\begin{equation} \label{mic-eq0}
(u_c=1) \Longrightarrow \sum_{h \in H_c} (-P^{}_{hc}x_{hc}) \pi_{l(hc),t(hc)} \geq F_c + \sum_{h \in H_c} (-P^{}_{hc}x_{hc}) V_c,
\end{equation}

where $H_c$ denotes the set of hourly orders associated with the MIC order $c$. The left-hand side represents the total income related to order $c$, given the market prices $\pi_{l,t}$ and executed amount of power  $\sum_{h \in H_c} -P_{hc}x_{hc}$, while the right-hand side corresponds to the fixed and variable costs of production. At first sight, this condition is non-linear and non-convex, because of the terms $x_{hc} \pi_{l(hc),t(hc)}$ in the left-hand side.  

In previous works, MIC constraints are either approximated, see  \cite{GarciaBertrand,ruiz2011}, or the full model is decomposed and solved heuristically, as in the approach described in EUPHEMIA (\cite{euphemia}).
More specifically only the primal part \eqref{primal-obj}--\eqref{primal-eq8} is considered in a master problem, and prices are computed only when integer solutions are found. 
Let us however recall that \cite{GarciaBertrand,ruiz2011} on the one hand and \cite{euphemia} on the other hand are considering distinct market models. We show in the next section how MIC conditions can be linearized without approximation in the common European market model considered by \cite{euphemia}. To the best of our knowledge, this is the first exact linearisation proposed for this type of conditions.

\subsection{Exact linearization of the MIC conditions}

The following lemma is the key reason for which it is possible to express the a priori non-linear non-convex MIC condition (\ref{mic-eq0}) as a linear constraint. As we dispose in UMFS of both the surplus variables $s_c$ and the contributions to welfare $(P_{hc}x_{hc})\lambda^{hc}$, we can use them to express the income in a linear way:
\medskip

\begin{lemma}\label{lemma-mic-lin} Consider any feasible point of UMFS. Then, the following holds:
\begin{equation}\label{mic-eq1}
\forall c \in C, \sum_{h \in H_{c}} (P_{hc}x_{hc})\pi_{l(hc),t(hc)} = \sum_{h \in H_{c}} (P_{hc}\lambda^{hc}) x_{hc} - s_{c}
\end{equation} 
\end{lemma}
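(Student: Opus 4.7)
My plan is to invoke Theorem~\ref{maintheorem}(I), which guarantees that any feasible point of UMFS automatically satisfies the full primal--dual--complementarity system (\ref{primal-eq1})--(\ref{cc-eq15}) for the partitions $J_a = \{j : y_j = 1\}$, $J_r = \{j : y_j = 0\}$, $C_a = \{c : u_c = 1\}$, $C_r = \{c : u_c = 0\}$. Hence all complementarity slackness identities are at my disposal, and the MIC identity (\ref{mic-eq1}) will follow by two short manipulations for a fixed $c \in C$.

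First I would fix $c \in C$ and apply the complementarity condition (\ref{cc-eq11}) associated with $x_{hc}$, namely $x_{hc}(s_{hc} + P_{hc}\pi_{l(hc),t(hc)} - P_{hc}\lambda^{hc}) = 0$ for each $h \in H_c$. Rearranging and summing over $h$ yields
\[
\sum_{h \in H_c} (P_{hc} x_{hc})\,\pi_{l(hc),t(hc)} = \sum_{h \in H_c} (P_{hc}\lambda^{hc}) x_{hc} - \sum_{h \in H_c} s_{hc} x_{hc},
\]
so the only thing left to verify is the collapse $\sum_{h \in H_c} s_{hc} x_{hc} = s_c$. For this I would use (\ref{cc-eq3}) in the form $s_{hc}(u_c - x_{hc}) = 0$, giving $s_{hc} x_{hc} = u_c s_{hc}$, hence $\sum_{h \in H_c} s_{hc} x_{hc} = u_c \sum_{h \in H_c} s_{hc}$.

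I would then split on the binary value of $u_c$. If $u_c = 0$, complementarity (\ref{cc-eq4}) forces $s_c = 0$ and both sides vanish. If $u_c = 1$, then $c \in C_a$, and (\ref{cc-eq15}) reads $s_c - du^a_c = \sum_{h \in H_c} s_{hc}$; since UMFS imposes $du^a_c = 0$ via (\ref{primaldual-eq16}), this collapses to $s_c = \sum_{h \in H_c} s_{hc}$, matching the right-hand side. The only delicate point is precisely this $u_c = 1$ case: without the normalisation $du^a_c \equiv 0$ (justified by Lemma~\ref{lemma-duadur}(ii) and hard-coded in UMFS), one would only recover $s_c \geq \sum_{h \in H_c} s_{hc}$ and the linearised income would be overestimated by the slack $du^a_c$. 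Every other step is bookkeeping on complementarity slackness, and substituting back then yields (\ref{mic-eq1}).
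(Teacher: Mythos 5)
Your proof is correct and follows essentially the same route as the paper's: both rest on the complementarity conditions (\ref{cc-eq11}), (\ref{cc-eq3}), (\ref{cc-eq4}), (\ref{cc-eq15}) together with the normalisation $du^a_c=0$, and both reduce to the collapse $\sum_{h}s_{hc}x_{hc}=s_c$ via the case split on $u_c$ (the paper phrases this as the partition $C=C_r\cup C_a$). The only difference is organisational: by multiplying (\ref{cc-eq11}) out directly you avoid the paper's sub-case analysis on $x_{hc}=0$ versus $x_{hc}>0$, which is a mild streamlining rather than a different argument.
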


\begin{proof} We first define $C_r$ and $C_a$ as in Theorem \ref{maintheorem}.
For $c_r \in C_r$, the identity is trivially satisfied, because if a MIC bid is rejected, all related hourly bids are rejected: $\forall h \in H_{c_r}, x_{hc_r}=0$, and on the other side, $s_{c_r}=0$ because of complementarity constraints (\ref{cc-eq4}).

Let us now consider an accepted MIC bid $c_a \in C_a$. 
We first show that the following identity holds:

\begin{equation}
(P_{hc_a}x_{hc_a})\pi_{l(hc_a),t(hc_a)} = (P_{hc_a}\lambda^{hc_a}) x_{hc_a} - s_{hc_a} \label{linear}
\end{equation}

Consider for $x_{hc_a}$ the following two possibilities, noting that $u_{c_a}=1$:

(a) if $x_{hc_a} = 0$, the identity \eqref{linear} is trivially satisfied, as $s_{hc_a}=0$ according to complementarity constraints (\ref{cc-eq3}).

(b) if $0<x_{hc_a}$, (\ref{cc-eq11}) gives 
$s_{hc_a} = P_{hc_a}\lambda^{hc_a} - P_{hc_a}\pi_{l(hc_a),t(hc_a)}$, so multiplying the equation by $x_{hc_a}$ and using (\ref{cc-eq3}) guaranteeing $s_{hc_a} x_{hc_a} = s_{hc_a} u_{c_a} = s_{hc_a}$, we get identity \eqref{linear}.

Summing up \eqref{linear} over $h \in H_{c_a}$ yields: 
\begin{multline*}
\sum_{h \in H_{c_a}} (P_{hc_a}x_{hc_a})\pi_{l(hc_a),t(hc_a)} = \sum_{h \in H_{c_a}} (P_{hc_a}\lambda^{hc_a}) x_{hc_a} - \sum_{hc_a} s_{hc_a}
\end{multline*}

Finally, using complementarity constraints (\ref{cc-eq15}), we get:

\begin{multline*}
\sum_{h \in H_{c_a}} (P_{hc_a}x_{hc_a})\pi_{l(hc_a),t(hc_a)} = \sum_{h \in H_{c_a}} (P_{hc_a}\lambda^{hc_a}) x_{hc_a} - (s_{c_a} - du^a_{c_a}),
\end{multline*} 

where $du^a_{c_a}:=0$ by Lemma \ref{lemma-duadur} in the definition of UMFS, providing the required identity (\ref{mic-eq1}).\end{proof}

Using  Lemma \ref{lemma-mic-lin}, the MIC condition (\ref{mic-eq0}) can be stated in a linear way as follows:

\begin{equation}\label{mic-eq2}
s_c - \sum_{h \in H_{c}} (P_{hc}\lambda^{hc}) x_{hc} \geq F_c + \sum_{h \in H_{c}} (-P_{hc}x_{hc}) V_c -  \overline{M_c}(1-u_c)
\end{equation}

where $\overline{M_c}$ is a fixed number large enough to deactivate the constraint when $u_c=0$. As $u_c=0$ implies $s_c=0$ and $x_{hc}=0$, we set $\overline{M_c}:=F_c$.

\subsection{Welfare maximization with MIC bids, without any auxiliary variables}

We propose here a formulation of the welfare maximization problem including MIC bids, avoiding any auxiliary variables, by eliminating the variables $d^a, d^r, du^r$ from the formulation UMFS.

Let us consider UMFS with the additional MIC conditions (\ref{mic-eq2}) for $c \in C$. We can make a first simplification of the model by replacing both kinds of conditions (\ref{primaldual-eq12}), (\ref{primaldual-eq14}) by the conditions (\ref{clean-1}) below. Also, under PCR market rules, as no bid can be paradoxically executed, according to Lemma \ref{lemma-dadr}, we must set $ d^a_j = 0, \forall j\in J$. With this last conditions added, in the same way, we can clean up the mathematical formulation by replacing (\ref{primaldual-eq11}) and (\ref{primaldual-eq13}) by conditions (\ref{clean-2}) below, as well as removing constraints (\ref{primaldual-eq15})-(\ref{primaldual-eq16}). This yields an equivalent MILP formulation without any auxiliary variables, and in particular no more binary variables than the number of block and MIC bids:

\begin{align}   
&\textbf{PCR-FS} \nonumber\\
&\sum_{i} (\lambda^{i}P^{}_{i})x_i + \sum_{c, h\in H_c} (\lambda^{hc}P^{}_{hc})x_{hc}  +  \sum_{j,t} (\lambda^{j}P^{}_{j,t})y_j  \nonumber\\ & \geq  \sum_{i} s_i + \sum_{j} s_j +\sum_c s_c  + \sum_m w_m v_m \\
&x_i \leq 1 & \forall i \in I \  [s_i] \\
&y_j \leq 1 & \forall j \in J \ [s_j] \\
&x_{hc} \leq u_{c} & \forall h \in H_c, c\in C \ [s_{hc}]  \\
&u_{c} \leq 1 &\forall c \in C [s_{c}]  \\
&\sum_{i \in I_{lt}}P^{}_{i}x_i + \sum_{j \in J_l}P^{t}_{j}y_j  + \sum_{hc\in HC_{lt}} P^{}_{hc}x_{hc}  \nonumber\\ & \hspace{4cm} = \sum_{k} e^k_{l,t} n_k, \ &  \forall (l,t) \ \ [\pi_{l,t}] \\
&\sum_{k} a_{m,k} n_k  \leq w_{m}\ & \forall m \in N \ \  [v_{m}]  \\                  
&x, y, u\geq 0, \\
&y,u \in \mathbb{Z}  \\
&s_i + P^{}_i \pi_{l(i),t(i)} \geq P^{}_i \lambda^{i} , &  \forall i  \qquad [x_i]  \\
&s_{hc} + P^{}_{hc}\pi_{l(hc),t(hc)} \geq P^{}_{hc}\lambda^{hc}, & \forall h \in H_c, c \ [x_{hc}]  \\
&s_{j}  + M_j (1-y_j) + \sum_t P^{t}_{j}\pi_{l(j),t} \geq P^{}_{j}\lambda^{j}, & \forall j \in J \ [y_{j}] \label{clean-2}  \\
&s_{c} + M_c (1-u_{c}) \geq \sum_{h \in H_{c}} s_{hc} & \forall c \in C [u_{c}] \label{clean-1} \\
&\sum_m a_{m,k} v_m- \sum_{l,t}e^k_{l,t} \pi_{l,t} = 0 &\ \forall k \in K  [n_k] \\
&s_c - \sum_{h \in H_{c}} (P_{hc}\lambda^{hc}) x_{hc} \geq \nonumber\\ 
& \hspace{1cm} F_c + \sum_{h \in H_{c}} (-P_{hc}x_{hc}) V_c - \overline{M_c}(1-u_c) &\ \ \forall c \in C \\
&s_i, s_j,s_c,s_{hc}, d^r, du^r, v_m \geq 0 
\end{align}

The welfare optimization problem is then stated as follows:

\begin{equation}
\max_{PCR-FS} \ \  \sum_{i} (\lambda^{i}P^{}_{i})x_i + \sum_{c, h\in H_c} (\lambda^{hc}P^{}_{hc})x_{hc}  +  \sum_{j,t} (\lambda^{j}P^{}_{j,t})y_j
\end{equation}

\section{Using the framework for economic analysis purposes}\label{section-usingframework}

\subsection{Maximizing the traded volume} The following program aims at maximizing the traded volume under the same market rules:

\begin{center}
\begin{multline}
\max_{PCR-FS} \ \  \sum_{i | P_i >0} P^{}_{i}x_i + \sum_{c, h\in H_c | P_{hc} > 0} P^{}_{hc}x_{hc}  +  \sum_{(j,t) | P^{t}_{j} > 0} P^{t}_{j}y_j 
\end{multline}
\end{center}

An alternative formulation of the objective function is the following:

\begin{center}
\begin{multline}
\max_{PCR-FS} \ \ \frac{1}{2}(  \sum_{i} |P^{}_{i}|x_i + \sum_{c, h\in H_c} |P^{}_{hc}|x_{hc}  +  \sum_{(j,t) } |P^{t}_{j}|y_j) 
\end{multline}
\end{center}

\subsection{Minimizing opportunity costs of PRB}

It suffices to consider the following objective function over UMFS and the additional constraints $d^a_j = 0, \forall j \ J$:

\begin{center}
\begin{equation}
\min \sum_j d^r_j
\end{equation}
\end{center}

Let us also note that constraints like (\ref{primaldual-eq13}) also allow to control which block bids could be paradoxically rejected on an individual basis, or could also be used to forbid the paradoxical rejection of bids which are too deeply in-the-money, by specifying a threshold via the values of $M_j$.

\subsection{A short proof for a property of an alternative market model}

The result below, first proposed in \cite{oneill2007}, basically states that (a) there is always enough welfare to allow and compensate paradoxically accepted block bids (PAB), (b) allowing PAB generates globally more welfare. We show how this can be derived almost directly from the proposed framework, and we specify the decomposition of the welfare into the sum of (non-negative) individual bid surpluses minus paid compensations to PAB, cf. the interpretation of the $d^a_j$ given in Lemma \ref{lemma-dadr}. Therefore, there is an incentive for market participants, globally, to allow and compensate PAB:

\begin{theorem}
Let us denote UMFS2 the feasible set described by the constraints of UMFS (\ref{primaldual-eqobj})-(\ref{primaldual-eq18}), together with the MIC constraints (\ref{mic-eq2}), and PCR-FS2 the same feasible set UMFS2 with the additional constraints $\forall j \in J, d^a_j=0$ to forbid PAB. Then, trivially:
\begin{enumerate}
\item For any feasible point of UMFS2 (resp. PCR-FS2):

\begin{multline}
\sum_{i} (\lambda^{i}P^{}_{i})x_i + \sum_{c, h\in H_c} (\lambda^{hc}P^{}_{hc})x_{hc}  +  \sum_{j,t} (\lambda^{j}P^{}_{j,t})y_j  \\ =  \sum_{i} s_i + \sum_{j} s_j +\sum_c s_c + \sum_m w_m v_m  - \sum_{j \in J} d^a_{j} 
\end{multline}

\item 
\begin{multline} 0\leq \max_{PCR-FS2} \sum_{i} s_i + \sum_{j} s_j +\sum_c s_c + \sum_m w_m v_m
 \\ \leq \max_{UMFS2} \sum_{i} s_i + \sum_{j} s_j +\sum_c s_c + \sum_m w_m v_m  - \sum_{j \in J} d^a_{j} 
\end{multline}
\end{enumerate}

\end{theorem}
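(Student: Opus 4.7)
The plan is to derive Part 1 from strong LP duality applied to the parametrized primal-dual pair of Section \ref{subsec:duality-uniformprices}, and Part 2 as a direct consequence of set inclusion together with Part 1.

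For Part 1, I would observe that any feasible point of UMFS2 satisfies the inequality (\ref{primaldual-eqobj}), which says the primal welfare objective is at least the dual objective. Fixing the integer variables $y,u$ to their values at this point reduces the underlying market clearing problem to exactly the LP pair of Section \ref{subsec:duality-uniformprices}, so weak LP duality provides the reverse inequality. Combining the two yields the equality claimed in Part 1, after identifying $\sum_{j \in J} d^a_j$ with $\sum_{j_a \in J_a} d^a_{j_a}$ in the dual objective (constraint (\ref{primaldual-eq15}) forces $d^a_j = 0$ whenever $y_j = 0$) and dropping the $du^a_c$ terms thanks to the UMFS convention (\ref{primaldual-eq16}), cf.\ Lemma \ref{lemma-duadur}.

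For Part 2, the upper bound is immediate: PCR-FS2 is UMFS2 augmented with the extra constraints $d^a_j = 0$, hence PCR-FS2 $\subseteq$ UMFS2, and on this subset the surplus sum coincides with the surplus sum minus $\sum_j d^a_j$. Applying Part 1 to each side then turns the inequality into max welfare in PCR-FS2 $\leq$ max welfare in UMFS2, which is exactly what the inclusion gives. For the lower bound, I would exhibit a single feasible point of PCR-FS2 achieving value $0$: start from the no-trade primal solution $x = y = u = 0$ with $n$ satisfying the balance and capacity constraints, take any optimal dual of the induced LP (which exists since that LP is feasible and bounded), and apply Theorem \ref{maintheorem}(II) to lift the pair into UMFS2. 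Since $y_j = 0$ forces $d^a_j = 0$ by (\ref{primaldual-eq15}), the lifted point in fact lies in PCR-FS2; by Part 1 its surplus sum equals its welfare, which is $0$.

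The only real subtlety is recognising that the equality in Part 1 is forced by combining strong LP duality with the primal-versus-dual inequality (\ref{primaldual-eqobj}) baked into UMFS; everything else, including both bounds in Part 2, is then bookkeeping about automatically vanishing $d^a$ variables and a single invocation of the lifting statement of Theorem \ref{maintheorem}.
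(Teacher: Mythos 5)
Your proof is correct and takes essentially the same route as the paper's: Part~1 is the strong-duality equality (the paper's Observation~\ref{observation-1} combined with the inequality (\ref{primaldual-eqobj}) built into UMFS), and Part~2's upper bound is the inclusion $\text{PCR-FS2} \subseteq \text{UMFS2}$ with $d^a_j = 0$. You additionally spell out the lower bound $0 \leq \max$ via a lifted no-trade point, a detail the paper dismisses as trivial; that filler is sound (note the MIC constraint (\ref{mic-eq2}) is deactivated by $\overline{M_c}=F_c$ when $u_c=0$).
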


\begin{proof}
\begin{enumerate}
\item The first equality is a consequence of duality for linear programs.

\item As $PCR-FS2 \subseteq UMFS2$, and in $PCR-FS2$, $\forall j \in J, d^a_j=0$, the inequality follows.
\end{enumerate}
\end{proof}

\section{Numerical Experiments}\label{sec-numerical}

We provide here a proof-of-concept of the approach, presenting numerical experiments, using realistic \emph{large-scale instances}, on: (a) welfare maximization with MIC bids, (b) traded volume maximisation under CWE rules (i.e. without MIC bids), and (c) opportunity costs minimization also under CWE market rules. The models have been implemented in C++ using IBM ILOG Concert Technology interfaced with R for input-output management as well as post-processing analysis, and solved using CPLEX 12.5.1 using 4 threads on a platform with 2x Xeon X5650 (6 cores @ 2.66 GHz), 16 GB of RAM, running Fedora Linux 20. One potential advantage of the new primal-dual approach is the possibility to benefit from parallel computing routines of state-of-the-art solvers like CPLEX, without implementing further algorithmic work.

\subsection{Welfare maximization with MIC bids}

We first consider solving market instances with MIC bids only. The instances involve hourly bids from four areas, one of these areas also containing MIC bids. Solving this kind of instances up to optimality is tractable, see Table \ref{table-welfaremic}, though some instances are challenging from a numerical stability point of view, due to the introduction of BigM's. A particular care should be given to solver's tolerance parameters, e.g. integer feasibility tolerance. Heuristics and cuts have been here deactivated, and the branching direction has been set to -1 (priority to the 'down branch'). The idea is that priority must be given to eliminating as many MIC bids as needed to increase the prices sufficiently to ensure enough income to the accepted ones. It should be noted that when discarding the MIC conditions, no MIC bid is getting the minimum income required, certainly because in that (non-feasible) case, the overall offer formed by the hourly suborders is too important at low marginal prices.

Let us emphasize that this new formulation provides dual LP bounds, and can be solved \emph{exactly} by state-of-the-art MIP solvers. To our knowledge, this is the first tractable exact approach proposed. (Another, apparently non-tractable approach would be to proceed by decomposing the problem and adding e.g. no-good cuts rejecting the current MIC bids selection when no admissible prices exist, with a very slow convergence rate.)

\begin{table}[h]
\begin{center}
\begin{tabular}{c|cccccc}
\#inst	&Run. Time (s.)	&Nodes	&Abs. Gap	&Rel. Gap	&\#Houly bids	&\# MIC bids\\
\hline
1	&70	&58	&	&	&47107	&70\\
2	&274	&896	&	&	&49299	&74\\
3	&432	&1111	&	&	&48119	&71\\
4	&144	&104	&	&	&52434	&72\\
5	&37	&19	&	&	&41623	&74\\
6	&901	&3238	&1299754.97	&0.04\%	&45371	&69\\
7	&22	&23	&	&	&36819	&73\\
8	&624	&1055	&	&	&53516	&69\\
9	&255	&504	&	&	&62770	&76\\
10	&216	&418	&	&	&45731	&74\\
\end{tabular}
\end{center}
\caption{Welfare optimization with MIC bids}
\label{table-welfaremic}
\end{table}

Adding block bids render the problem much more difficult to solve. Solver's parameters used are the same as above. It turns out that many block bids are fractionally accepted in continuous relaxations of the branch-and-cut tree. As a consequence, the first feasible solutions found are of poor quality, with a few block and MIC bids accepted. Instead, as an easy-to-implement heuristic approach, we first solve an instance with all block bids fixed to zero, and determine an admissible MIC bids selection (ideally optimal for this subproblem). Second, we fix this MIC bids selection, and introduce block bids, to determine a potentially very good solution to the initial problem. It should be noted that a solution for this second stage always exists, as in the worst case, all block bids could be rejected. Third, the obtained solution is used as a MIP start for the initial model with both block and MIC bids. With this approach, the number of block bids accepted is of the same order as when no MIC bids are considered besides, and the relative gap is improved, compared to the basic approach of providing the solver with the formulation 'as is'. However the absolute MIP gap remains substantial. Results are presented in Table \ref{table-welfaremic_wb}, with a running time limit of 900 seconds for each of the two first  stages, and 1200 seconds for the last stage.
\begin{table}[h]
\begin{center}
\begin{tabular}{c|cccccc}
\#inst	&Nodes	&Abs. Gap	&Rel. Gap	&\#Houly bids	&\# MIC bids	&\# Blocks bids\\
\hline
1	&13214	&1015887.92	&0.04\%	&47107	&70	&502\\
2	&7913	&4129620.69	&0.14\%	&49299	&74	&589\\
3	&11375	&2748987.16	&0.12\%	&48119	&71	&516\\
4	&2873	&3009748.14	&0.10\%	&52434	&72	&591\\
5	&12213	&1425671.83	&0.05\%	&41623	&74	&588\\
6	&6443	&5999741.05	&0.19\%	&45371	&69	&567\\
7	&22250	&337651.70	&0.01\%	&36819	&73	&550\\
8	&6925	&4747440.57	&0.19\%	&53516	&69	&691\\
9	&3658	&2937928.67	&0.08\%	&62770	&76	&604\\
10	&3194	&3100317.15	&0.12\%	&45731	&74	&537\\\end{tabular}
\end{center}
\caption{Welfare optimization with MIC and block bids}
\label{table-welfaremic_wb}
\end{table}

\subsection{Traded volume maximization}
To optimize the traded volume, welfare maximization itself turns out to be a useful \emph{heuristic}. We first solve this welfare maximization problem, and for the given optimal block bid selection, maximize the traded volume (dealing with a possible indeterminacy of the traded volume for that solution). 
We then use this solution as a MIP start for the initial problem. This helps in practice, at least to provide a useful upper bound, even in some cases proving optimality of the welfare maximizing solution for the traded volume maximization problem. We also observed that  provided the reasonably good solution obtained from maximizing welfare, well-known heuristics such as 'solution polishing' in CPLEX could provide quickly better solutions. Therefore, this heuristic is first applied when considering the second stage solving the initial traded volume maximization problem itself, provided the solution obtained at the first stage. Table \ref{table-tradedvolume} summarizes the trade-off  between both kinds of objectives for ten instances corresponding to the belgian market.
\begin{table}[h]
\begin{center}
\begin{tabular}{c|ccccccc}
	&Welf. max sol.	&\multicolumn{2}{c}{Maximizing traded volume}	&$\Delta$ Vol.	& $\Delta$ Welf. &\# Hourly & \# Blocks \\
\#	&Max Trad. Vol.	&Max Trad. Vol.	&Best bound	& & &bids &bids \\
\hline
1	&24589.84	&24589.84	&24589.84	&0.00	&0.00	&1939	&54\\
2	&25794.53	&25928.19	&26654.32	&133.66	&5672.35	&1711	&67\\
3	&23633.48	&23696.99	&23696.99	&63.51	&171.40	&1706	&54\\
4	&35137.32	&35285.32	&35285.32	&148.00	&4292.13	&1893	&56\\
5	&21296.94	&21433.08	&21433.08	&136.15	&2460.55	&1713	&39\\
6	&23361.72	&23871.27	&23871.27	&509.55	&56518.94	&1700	&46\\
7	&23542.38	&23679.64	&23679.64	&137.26	&1877.94	&1749	&35\\
8	&35974.15	&36270.11	&36270.11	&295.96	&21403.03	&1533	&58\\
9	&24988.63	&24988.63	&24988.63	&0.00	&0.00	&1787	&33\\
10	&35307.91	&35507.32	&37434.39	&199.41	&16082.15	&1418	&62\\
\end{tabular}
\end{center}
\caption{Comparison of the maximum traded volume in both cases}
\label{table-tradedvolume}
\end{table}

For example, instances \# 2 or \# 3 show concrete examples where it is possible to obtain more traded volume than when just optimizing welfare (as in the toy example presented above), the better solution for \# 3  even being proven optimal. Instance \# 1 shows an example where the welfare maximizing solution is proven optimal for the traded volume maximization problem. Sometimes the traded volume can be significantly larger (2\% or more), as in instance \#6.

\subsection{Minimizing opportunity costs}

We proceed as above, (a) first solving the welfare maximizing solution, (b) looking for the minimum opportunity costs possible for this solution, and (c) use this solution as a start solution for the proper opportunity costs minimization problem. Let us note that prices and opportunity costs obtained from stage (b) can substantially differ from the prices computed in practice, as these prices are determined in a different way from what is specified by tie breaking rules in case of price/volume indeterminacy. Let us recall that welfare is uniquely determined by the block bid selections and hence not affected by stage (b), see \cite{madani2014}. We also refer to \cite{madani2} for a table showing results for a few real CWE instances from 2011.

Results are given in Table \ref{table-opportunitycost}. Optimal solutions are found in the majority of the instances (9 out of 10).
Again, for example, instances \# 1 or \# 2 show that solutions to both problems do not coincide in general (as in the toy example of section \ref{subsec:toyexample}). Opportunity cost can sometimes be reduced by 75\% or more, for example for instance \#3. 
In the case of instances \# 5 and \# 9, the welfare maximizing solution is proven optimal for the opportunity costs minimization problem.

\begin{table}[h]
\begin{center}
\begin{tabular}{c|ccccccc}
	&W-MAXSOL	&\multicolumn{2}{c}{Minimizing Opp. Costs}	&$\Delta$ OC.	& $\Delta$ Welf. &\# Hourly & \# Blocks \\
\#	&Min OC	& Min OC	&Best bound	& & & & \\
\hline
1	&13096.96	&5624.37	&5624.37	&7472.58	&2501.76	&1939	&54\\
2	&6559.97	&2124.96	&2124.96	&4435.01	&963.19	&1711	&67\\
3	&3913.16	&978.61	&978.61	&2934.55	&171.40	&1706	&54\\
4	&483.71	&348.00	&348.00	&135.71	&138.65	&1893	&56\\
5	&1715.30	&1715.30	&1715.30	&0.00	&0.00	&1713	&39\\
6	&49366.33	&46405.44	&46405.44	&2960.90	&1577.61	&1700	&46\\
7	&8771.51	&8771.51	&8771.51	&0.00	&0.00	&1749	&35\\
8	&17249.96	&7399.43	&7399.43	&9850.53	&236.38	&1533	&58\\
9	&256.81	&256.81	&256.81	&0.00	&0.00	&1787	&33\\
10	&64777.46	&61579.08	&3198.25	&3198.37	&1591.57	&1418	&62\\
\end{tabular}
\end{center}
\caption{Comparison of opportunity costs in both cases}
\label{table-opportunitycost}
\end{table}

\section{Conclusions}

The new primal-dual approach proposed here allows to derive powerful algorithmic tools as well as enables to deal with economic issues of interest for day-ahead auctions stakeholders. We have been able to give a MILP formulation of the market clearing problem in the presence of MIC bids, avoiding the introduction of any auxiliary variables, relying on an exact linearisation of the minimum income condition. To the best of our knowledge, it is the first tractable exact approach proposed to deal with such kind of bids, and numerical experiments show good results, though the approach is still challenging when both block and MIC bids are considered together. From the economic analysis point of view, the approach allowed us to examine the trade-off occurring in practice between different objectives such as welfare maximization, traded volume maximization, and minimization of opportunity costs of paradoxically rejected block bids. It also seems  these are the  first tractable formulations proposed to examine these economic issues. 
The trade-offs for the examined instances were rather small, though they could be more important in absolute terms if the number and size of non-convex bids are allowed to increase. 
We also plan to release soon a Julia package implementing the models and algorithms, to foster exchanges and provide adaptable tools to the academic community working on related research topics.

\textbf{Acknowledgements:}
We greatly thank APX, BELPEX, EPEX SPOT, OMIE and N-Side for providing us with data used to generate realistic instances. We also thank organizers of the 11th International (IEEE) Conference on European Energy Market (Krakow, May 2014), as well as organizers of the COST Workshop on Mathematical Models and Methods for Energy Optimization (Budapest, Sept. 2014), for allowing us to present partial results developed here. This text presents research results of the P7/36 PAI project COMEX, part of the IPA Belgian Program. The work was also supported by EC-FP7 COST Action TD1207. The scientific responsibility is assumed by the authors.


%
%

\section{Omitted proofs in main text}

Let us first consider the equality of primal an dual objective functions of section \ref{subsec:unrestricted-welfare}:

\begin{observation}\label{observation-1}
By strong duality for linear programs, for a pair of primal and dual feasible points corresponding to a block bid selection and a MIC bid selection, i.e. satisfying respectively (\ref{primal-eq1})-(\ref{fix-eq4}) and (\ref{dual-eq1})-(\ref{dual-eq8}), the complementarity constraints (\ref{cc-eq1})-(\ref{cc-eq15}) hold if and only if we have the equality $(\ref{primal-obj}) = (\ref{dual-obj})$.
\end{observation}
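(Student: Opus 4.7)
The observation is the classical complementary slackness characterization for LPs, applied to the primal program \eqref{primal-eq1}-\eqref{fix-eq4} (without the integrality restriction \eqref{primal-eq8}) and its dual \eqref{dual-obj}-\eqref{dual-eq8}. The plan is to establish the algebraic identity
\[
\eqref{dual-obj} - \eqref{primal-obj} \;=\; \sum_{k=1}^{15} T_k,
\]
where $T_1,\ldots,T_{15}$ are the fifteen complementarity products appearing in \eqref{cc-eq1}-\eqref{cc-eq15}, each non-negative under primal and dual feasibility. Once this identity is available, both directions follow at once: if complementarity holds, every $T_k$ vanishes so the objectives coincide; conversely, a sum of non-negative terms equal to zero forces each $T_k$ to vanish, i.e.\ \eqref{cc-eq1}-\eqref{cc-eq15} all hold.

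To build the identity, I would proceed by the standard duality-gap decomposition. For each primal inequality \eqref{primal-eq1}-\eqref{primal-eq4}, \eqref{primal-eq6}, and \eqref{fix-eq1}-\eqref{fix-eq4}, form the product of its non-negative slack with the associated non-negative dual multiplier; these are exactly the products \eqref{cc-eq1}-\eqref{cc-eq9}. Symmetrically, for each dual inequality \eqref{dual-eq1}-\eqref{dual-eq6}, form the product of its non-negative slack with the corresponding non-negative primal variable ($x_i, x_{hc}, y_j, u_c \geq 0$); these yield \eqref{cc-eq10}-\eqref{cc-eq15}. Summing all fifteen products and collecting by primal and by dual variable, the coefficients of each primal variable regroup into the corresponding dual inequality expression (and symmetrically for the dual variables), while the $\pi_{l,t}$- and $n_k$-cross contributions collapse via the equalities \eqref{primal-eq5} and \eqref{dual-eq7}. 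What remains is precisely $\eqref{dual-obj} - \eqref{primal-obj}$, yielding the target identity.

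The only real obstacle is sign-tracking for the fixing constraints \eqref{fix-eq1} and \eqref{fix-eq3}, whose right-hand sides are $-1$ and whose dual variables $d^a_{j_a}, du^a_{c_a}$ therefore enter \eqref{dual-obj} with negative signs. One must check that the resulting complementarity products — which the paper writes as $(1-y_{j_a})d^a_{j_a}$ in \eqref{cc-eq9} and $(1-u_{c_a})du^a_{c_a}$ in \eqref{cc-eq7} — match, up to an overall sign, the (slack)$\times$(multiplier) products $(y_{j_a}-1)d^a_{j_a}$ and $(u_{c_a}-1)du^a_{c_a}$ of the fixing constraints, so that each is zero exactly when the corresponding pair is complementary. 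Once this notational matching is verified, the identity holds by construction of the LP dual, and the equivalence stated in the observation follows immediately.
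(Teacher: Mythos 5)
Your proof is correct and matches the paper's justification: the paper states Observation~\ref{observation-1} without a written proof, simply invoking LP duality, and your duality-gap identity $(\ref{dual-obj})-(\ref{primal-obj})=\sum_k T_k$ with each term a nonnegative (slack)$\times$(multiplier) product is exactly the standard complementary-slackness argument behind that invocation. The only caveat --- which you partially flag for the fixing constraints --- is that several of the products as written in the paper, namely (\ref{cc-eq5}) and (\ref{cc-eq6})--(\ref{cc-eq9}), carry the opposite sign to the canonical nonnegative terms of the decomposition, which is immaterial for the ``equal to zero'' equivalence.
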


\subsection{Proof of Theorem \ref{maintheorem}}

\begin{proof}
We emphasize again, and use below, the fact that according to Lemma \ref{lemma-duadur}, we can assume without loss of generality $du^a_{c_a} = 0, \forall
c_a \in C_a$ in (\ref{dual-obj})-(\ref{dual-eq8}).

(I) Let $MCS=(x,y,n,u,\pi,v, s,d^a,d^r, du^a, du^r)$ be a feasible point of UMFS and let us define $J_r := \{j \in J | y_j = 0\}, J_a := \{j \in J | y_j = 1\}$ and likewise for $C_r, C_a$ with respect to the values of the variables $u_c$. Consider the projection $\tilde{MCS}=(x,y,n,u, \pi,v, s,d^a_{j_a \in J_a},d^r_{j_r \in J_r}, du^a_{c_a \in C_a}, du^r_{c_r \in C_r})$. Constraints (\ref{primaldual-eq11})-(\ref{primaldual-eq16}) ensure that $\tilde{MCS}$ satisfies constraints (\ref{dual-eq3})-(\ref{dual-eq6}): constraints (\ref{primaldual-eq13})-(\ref{primaldual-eq16}) are 'dispatching' constraints (\ref{primaldual-eq11})-(\ref{primaldual-eq12}) to constraints (\ref{dual-eq3})-(\ref{dual-eq6}). Therefore $\tilde{MCS}$ satisfies primal conditions (\ref{primal-eq1})-(\ref{fix-eq4}) and dual conditions (\ref{dual-eq1})-(\ref{dual-eq8}). Condition (\ref{primaldual-eq15}) ensures that $d^a_j=0$ for $j \in J_r$, and with (\ref{primaldual-eq16}), it shows that condition (\ref{primaldual-eqobj}) implies the equality $(\ref{primal-obj}) = (\ref{dual-obj})$. By Observation 1, we can then replace this equality by the needed complementarity conditions (\ref{cc-eq1})-(\ref{cc-eq15}).

(II) Conversely let $\tilde{MCS}=(x,y,n,u, \pi,v, s,d^a_{j_a \in J_a},d^r_{j_r \in J_r}, du^a_{c_a \in C_a}, du^r_{c_r \in C_r})$ be a point satisfying constraints primal conditions (\ref{primal-eq1})-(\ref{fix-eq4}), dual conditions (\ref{dual-eq1})-(\ref{dual-eq8}), and complementarity conditions (\ref{cc-eq1})-(\ref{cc-eq15}),  associated to a given block and MIC bid selection $J=J_a \cup J_r, C = C_a \cup C_r$. Observation 1 ensures that this point also satisfies the equality $(\ref{primal-obj}) = (\ref{dual-obj})$. Let us set additional values $d^r_j=0$, for $ j \in J_a$, also $d^a_j=0$ for $j \in J_r$, and similarly $du^a_c = 0$ for $c \in C_r$, $du^r_c = 0$ for $c \in C_a$, giving a point $MCS=(x,y,n, u, \pi,v, s,d^a,d^r, du^a, du^r)$.  The new point satisfies condition (\ref{primaldual-eqobj}), since only terms $d^a_j=0, j \in J_r, du^a_c=0$ are added to the equality $(\ref{primal-obj}) = (\ref{dual-obj})$. It remains to verify that all the remaining constraints defining UMFS are satisfied as well. All these additional values trivially satisfy constraints (\ref{primaldual-eq13})-(\ref{primaldual-eq16}).  Therefore, it is needed to show that conditions (\ref{primaldual-eq11})-(\ref{primaldual-eq16}) are also satisfied \emph{for all} $j \in J, c \in C$. Due to (\ref{cc-eq2}), in condition (\ref{dual-eq3}), $s_{j_r}=0$ and we can set $d^r_{j_r }:=P_{j_r} \lambda^{j_r}-P_{j_r} \pi$  without altering the satisfaction of any condition. Due to the price range condition and the choice of the parameters $M_j$, these $d^r_j,j \in J_r$ satisfy conditions (\ref{primaldual-eq13}), therefore satisfied for all $j \in J$. In condition (\ref{dual-eq4}), $s_{j_a},d^a_{j_a}$ can be redefined without modifying the values of $(s_{j_a}-d^a_{j_a})$ and hence without altering satisfaction of any other constraint. Due to the large values of the parameters $M_j$, this again can be done so as to satisfy conditions (\ref{primaldual-eq15}) for $j \in J_a$, hence for all $j \in J$. Then, (\ref{dual-eq3})-(\ref{dual-eq4}), and the 'dispatcher conditions' (\ref{primaldual-eq13})-(\ref{primaldual-eq15}) imply (\ref{primaldual-eq11}). Finally, concerning the analogue constraints related to the MIC bids, and first using Lemma \ref{lemma-dadr} to set $du^a_{c_a} = 0$ for all $c_a \in Ca$,  it is straightforward to show in a similar way that (\ref{dual-eq5})-(\ref{dual-eq6}) together with the $M_c$ and the additional null values for part of the $du^r$ (resp. $du^a$) given above allow to satisfy (\ref{primaldual-eq12}), (\ref{primaldual-eq14}).
\end{proof}

\bibliography{template}

\bibliographystyle{plainnat}


\end{document}